\numberwithin{equation}{section} 
\numberwithin{figure}{section} 
\newtheorem{theorem}{Theorem}[section]
\newtheorem{proposition}[theorem]{Proposition}
\newenvironment {proof} {{\it Proof.}}{\hspace*{\fill}$\Box$\par\vspace{4mm}}
\newcommand{\mc}{\mathcal}
\newcommand{\mb}{\mathbb}
\title{Pricing of claims in discrete time with partial information}
\author{Kristina Rognlien Dahl}
\begin{document}

\author{Kristina Rognlien Dahl \thanks{Department of Mathematics, University of Oslo, Pb. 1053 Blindern, 0316 Oslo, Norway (kristrd@math.uio.no, phone: +4722855863). AMS subject classification: 49N15, 60G99, 91G20.}}

\maketitle

\begin{abstract}
\noindent We consider the pricing problem of a seller with delayed price information. By using Lagrange duality, a dual problem is derived, and it is proved that there is no duality gap. This gives a characterization of the seller's price of a contingent claim. Finally, we analyze the dual problem, and compare the prices offered by two sellers with delayed and full information respectively.
\end{abstract}

{\bf Key words:} Mathematical finance, Lagrange duality, delayed information, pricing.

\section{Introduction}
\label{sec: introduction}

We consider the pricing problem of a seller of a contingent claim $B$ in a financial market with a finite scenario space $\Omega$ and a finite, discrete time setting. The seller is assumed to have information modeled by a filtration $(\mc{G}_t)_t$ which is generated by a delayed price process, so the seller has delayed price information. This delay of information is a realistic situation for many financial market traders. Actually, traders may pay to get updated prices.

The seller's problem is to find the smallest price of $B$, such that there is no risk of her losing money. We solve this by deriving a dual problem via Lagrange duality, and use the linear programming duality theorem to show that there is no duality gap. A related approach is that of King~\cite{King}, where the fundamental theorem of mathematical finance is proved using linear programming duality. Vanderbei and Pilar~\cite{Vanderbei2} also use linear programming to price American warrants.

A central theorem of this paper is Theorem~\ref{thm: selgerspris}, which describes the seller's price of the contingent claim. This generalizes a pricing result by Delbaen and Schachermayer to a delayed information setting (see \cite{DelbaenSchachPrice}, Theorem 5.7). Contrary to what one might guess, this characterization does not involve martingale measures. We can however get an idea of the seller's price by comparing it to that of an unconstrained seller, which is done in Section~\ref{sec: compare}. As one would expect, the seller with delayed information will offer $B$ at a higher price than a seller with full information.

Since the seller's pricing problem is parallel to the buyer's problem, of how much she is willing to pay for the claim, the results will carry through analogously for buyers. This implies that a buyer with delayed information is willing to pay less for the claim than a buyer with full information. Hence, the probability of a seller and buyer with delayed information agreeing on a price is smaller than that of fully informed agents.

This paper considers the case of finite $\Omega$ and discrete time. Although this is not the most general situation, it is of practical use, since one often envisions only a few possible world scenarios, and has a finite set of times where one wants to trade. Also, for this and similar problems in mathematical finance, discretization is necessary to find efficient computational methods.

There are many advantages to working with finite $\Omega$ and discrete time. The information structure of an agent can be illustrated in a scenario tree, making the information development easy to visualize. Conditions on adaptedness and predictability, are greatly simplified. Adaptedness of a process to a filtration means that the process takes one value in each vertex (node) of the scenario tree representing the filtration. Moreover, the general linear programming theory (see Vanderbei~\cite{Vanderbei}) and Lagrange duality framework (see Bertsekas et al.~\cite{Bertsekas}) apply. This allows application of powerful theorems such as the linear programming duality theorem. Also, computational algorithms from linear programming, such as the simplex algorithm and interior point methods, can be used to solve the seller's problem in specific situations. Note that the simplex algorithm is not theoretically efficient, but works very well in practice. Interior point methods, however, are both theoretically and practically efficient. Both algorithms will work well in practical situations where one considers a reasonable amount of possible world scenarios. Theoretically, they may nevertheless be inadequate for a very large number of possible scenarios.

Those familiar with linear programming may wonder why Lagrange duality is used to derive the dual problem instead of standard linear programming techniques. There are two important reasons for this. First of all, the Lagrange duality approach provides better economic understanding of the dual problem and allows for economic interpretations. Secondly, the Lagrange duality method can be explained briefly, and Lagrange methods are familiar to most mathematicians. Hence, using Lagrange duality makes this paper self-contained. The reader does not have to be familiar with linear programming or other kinds of optimization theory.

Other papers discussing the connection between mathematical finance and duality methods in optimization are Pennanen~\cite{Pennanen}, King~\cite{King}, King and Korf~\cite{KingKorf} and Pliska~\cite{Pliska}. Pennanen~\cite{Pennanen} considers the connection between mathematical finance and the conjugate duality framework of Rockafellar~\cite{Rockafellar}. King~\cite{King} proves the fundamental theorem of mathematical finance via linear programming duality, and King and Korf~\cite{KingKorf} derive a dual problem to the seller's pricing problem via the conjugate duality theory of Rockafellar. Pliska~\cite{Pliska} also uses linear programming duality to prove that there exists a linear pricing measure if and only if there are no dominant trading strategies.

Examples of papers considering models with different levels of information in mathematical finance are Di Nunno et. al~\cite{Bernt2}, Hu and {\O}ksendal~\cite{Bernt}, Biagini and {\O}ksendal~\cite{Biagini}, Lakner~\cite{Lakner} and Platen and Rungaldier~\cite{Platen}.

The remaining part of the paper is organized as follows. Section \ref{sec: simplersituation} explains the setting. The financial market is defined, the use of scenario trees to model filtrations is explained and the notation is introduced. Section \ref{sec: Lagrange} covers some background theory, namely Lagrange duality. Section \ref{sec: partialinfo} analyzes the seller's pricing problem with partial information via Lagrange duality. This leads to the central Theorem \ref{thm: selgerspris}. In Section~\ref{sec: compare} we analyze the dual problem, and compare the result of Theorem~\ref{thm: selgerspris} with the price offered by a seller will full information. This leads to Proposition~\ref{thm: compareprice}. The final section, Section \ref{sec: finalremarks}, concludes and poses questions for further research.

\section{The model}
\label{sec: simplersituation}
The financial market is modeled as follows. We are given a probability space $(\Omega, \mc{F}, P)$ consisting of a finite scenario space, $\Omega = \{\omega_1, \omega_2, \ldots, \omega_M\}$, a \\($\sigma$-)algebra (here, there is no difference between $\sigma$-algebras and algebras since $\Omega$ is finite) $\mc{F}$ on $\Omega$ and a probability measure $P$ on the measurable space $(\Omega, \mc{F})$.

 The financial market consists of $N+1$ assets: $N$ risky assets (stocks) and one non-risky asset (a bond). The assets each have a price process $S_{n}(t, \omega)$, $n = 0, 1, \ldots, N$, for $\omega \in \Omega$ and $t \in \{0, 1, \ldots, T\}$ where $T < \infty$, and $S_0$ denotes the price process of the bond. The price processes $S_{n}$, $n = 0, 1, \ldots, N$, are stochastic processes. We denote by $S(t,\omega) := (S_{0}(t,\omega), S_1(t,\omega), \ldots, S_N(t,\omega))$ the vector in $\mb{R}^{N+1}$ consisting of the price processes of all the assets. For notational convenience, we sometimes suppress the randomness, and write $S(t)$ instead of $S(t,\omega)$. Let $(\mc{F}_{t})_{t=0}^{T}$ be the filtration generated by the price processes. We assume that $\mc{F}_0 = \{\emptyset, \Omega\}$ (so the prices at time $0$, $S(0)$, are deterministic) and $\mc{F}_T$ is the algebra corresponding to the finest partition of $\Omega$, $\{\{\omega_1\}, \{\omega_2\}, \ldots, \{\omega_M\}\}$.

 We also assume that $S_0(t,\omega) = 1$ for all $t \in \{0, 1, \ldots,T\}$, $\omega \in \Omega$. This corresponds to having divided through all the other prices by $S_{0}$, and hence turning the bank into the numeraire of the market. This altered market is a discounted market. To simplify notation, the price processes in the discounted market are denoted by $S$ as well. Note that the stochastic process $(S_{n}(t))_{t=0}^{T}$ is adapted to the filtration $(\mc{F}_t)_{t=0}^{T}$.

Consider a contingent claim $B$, i.e., a non-negative, $\mc{F}_T$-measurable random variable. $B$ is a financial asset which may be traded in the market. Therefore, consider a seller of the claim $B$. This seller has price information which is delayed by one time step. We let $(\mc{G}_t)_t$ be the filtration modelling the information structure of the seller. Hence, we let $\mc{G}_0 = \{\emptyset, \Omega\}$, $\mc{G}_t = \mc{F}_{t-1}$ for $t=1, \ldots, T-1$ and $\mc{G}_T = \mc{F}_T$. These assumptions imply that at time $0$ the seller knows nothing, while at time $T$ the true world scenario is revealed. Note that since $\Omega$ is finite, there is a bijection between partitions and algebras (the algebra consists of every union of elements in the partition). The sets in the partition are called blocks. 

One can construct a scenario-tree illustrating the situation, with the tree branching according to the information partitions. Each vertex of the tree corresponds to a block in one of the partitions. Each $\omega \in \Omega$ represents a specific development in time, ending up in the particular world scenario at the final time $T$. Denote the set of vertices at time $t$ by $\mc{N}_{t}$, and let the vertices themselves be indexed by $v = v_1, v_2, \ldots, v_V$.

%
 \begin{figure}[h]
   \setlength{\unitlength}{0.7mm}
 \begin{picture}(50,80)(-40,0)
  \put(20,45){\circle*{3}} 
  \put(40,65){\circle*{3}} 
  \put(40,25){\circle*{3}} 
  \put(60,13){\circle*{3}} 
  \put(60,25){\circle*{3}} 
  \put(60,37){\circle*{3}} 
  \put(60,57){\circle*{3}} 
  \put(60,73){\circle*{3}} 
  \put(20,45){\line(1,1){20}}
  \put(20,45){\line(1,-1){20}}
  \put(40,65){\line(5,2){20}}
  \put(40,65){\line(5,-2){20}}
  \put(40,25){\line(5,3){20}}
  \put(40,25){\line(5,0){20}}
  \put(40,25){\line(5,-3){20}}

  \put(2,53){\makebox(0,0){$\Omega = \{\omega_1, \omega_2, \ldots, \omega_5\}$}}
  \put(40,75){\makebox(0,0){$\{\omega_1, \omega_2\}$}}
  \put(40,12){\makebox(0,0){$\{\omega_3, \omega_4, \omega_5\}$}}
  \put(68,73){\makebox(0,0){$\omega_1$}}
  \put(68,57){\makebox(0,0){$\omega_2$}}
  \put(68,37){\makebox(0,0){$\omega_3$}}
  \put(68,25){\makebox(0,0){$\omega_4$}}
  \put(68,13){\makebox(0,0){$\omega_5$}}

  \put(1,4){\line(35,0){80}}
  \put(20,4){\circle*{1}}
  \put(40,4){\circle*{1}}
  \put(60,4){\circle*{1}}

  \put(20,1){\makebox(0,0){$t = 0$}}
  \put(40,1){\makebox(0,0){$t = 1$}}
  \put(60,1){\makebox(0,0){$t = T = 2$}}
 \end{picture}
 \caption{A scenario tree.}
 \label{fig: scenariotre}
 \end{figure}
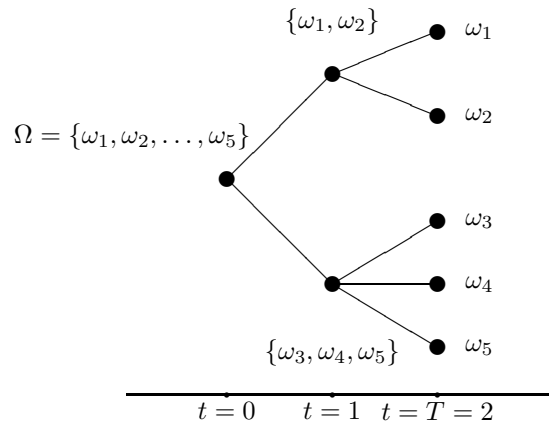

In the example illustrated in Figure \ref{fig: scenariotre}, $V = 8$ and $M = 5$. The filtration $(\mc{G}_t)_{t = 0, 1, 2}$ corresponds to the partitions $\mc{P}_1 = \{\Omega\}$, $\mc{P}_2 = \{\{\omega_1, \omega_2\}, \{\omega_3, \omega_4, \omega_5\}\}$, $\mc{P}_2 = \{\{\omega_1\}, \{\omega_2\}, \ldots, \{\omega_5\}\}$.

\smallskip

Some more notation is useful. The parent $a(v)$ of a vertex $v$ is the unique vertex $a(v)$ preceding $v$ in the scenario tree. Note that if $v \in \mc{N}_t$, then $a(v) \in \mc{N}_{t-1}$. Every vertex, except the first one, has a parent. Each vertex $v$, except the terminal vertices $\mc{N}_{T}$, have children vertices $\mc{C}(v)$. This is the set of vertices immediately succeeding the vertex $v$ in the scenario tree. For each non-terminal vertex $v$, the probability of ending up in vertex $v$ is called $p_{v}$, and $p_{v} = \sum_{u \in \mc{C}(v)} p_{u}$. Hence, from the original probability measure $P$, which gives probabilities to each of the terminal vertices, one can work backwards, computing probabilities for all the vertices in the scenario tree.

\smallskip

 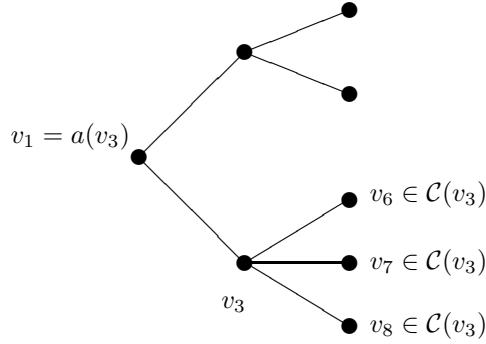
\begin{figure}[h]
   \setlength{\unitlength}{0.7mm}
 \begin{picture}(50,80)(-40,0)
  \put(20,40){\circle*{3}} 
  \put(40,60){\circle*{3}} 
  \put(40,20){\circle*{3}} 
  \put(60,8){\circle*{3}} 
  \put(60,20){\circle*{3}} 
  \put(60,32){\circle*{3}} 
  \put(60,52){\circle*{3}} 
  \put(60,68){\circle*{3}} 
  \put(20,40){\line(1,1){20}}
  \put(20,40){\line(1,-1){20}}
  \put(40,60){\line(5,2){20}}
  \put(40,60){\line(5,-2){20}}
  \put(40,20){\line(5,3){20}}
  \put(40,20){\line(5,0){20}}
  \put(40,20){\line(5,-3){20}}

  \put(7,44){\makebox(0,0){$v_1 = a(v_3)$}}
  \put(38,12){\makebox(0,0){$v_3$}}
  \put(75,33){\makebox(0,0){$v_6 \in \mc{C}(v_3)$}}
  \put(75,20){\makebox(0,0){$v_7 \in \mc{C}(v_3)$}}
  \put(75,8){\makebox(0,0){$v_8 \in \mc{C}(v_3)$}}
\end{picture}
\caption{Parent and children vertices in a scenario tree.}
\label{fig: scenariotre3}
\end{figure}

 The adaptedness of the price process $S$ to the filtration $(\mc{F}_{t})_t$ means that, for each asset $n$, there is one value for the price $S_{n}$ in each vertex of the scenario tree. This value is denoted by $S_{n}^{v}$.

\section{The pricing problem with partial information}
\label{sec: partialinfo}

Consider the model and the seller of Section \ref{sec: simplersituation}, with $T \geq 4$. Following the same approach for a smaller $T$ is not a problem, but requires different notation and must therefore be considered separately. Hence, we consider a seller of a contingent claim $B$ who has price information that is delayed with one time step. Recall that the seller's filtration $(\mc{G}_t)_t$ is such that $\mc{G}_0 = \{\emptyset, \Omega\}$, $\mc{G}_t = \mc{F}_{t-1}$ for $t=1, \ldots, T-1$, $\mc{G}_T=\mc{F}_T$.

The pricing problem of this seller is

\begin{equation}
\begin{array}{llrrlll}
\label{eq: prisproblemselger}
&\mbox{\rm{minimize}} &\kappa \\
&\mbox{subject to} \\
&&S_0 \cdot H_0 &\leq &\kappa, \\[\smallskipamount]
&&B_v &\leq &S_v \cdot H_{a_{\mc{G}}(v)} &\mbox{for all } v \in \mc{N}_T^{\mc{G}}, \\[\smallskipamount]
&&S_{\mc{C}_{\mc{G}}(v)} \cdot H_v &= &S_{\mc{C}_{\mc{G}}(v)} \cdot H_{a_{\mc{G}}(v)} &\mbox{for all } v \in \mc{N}_{t}^{\mc{G}} \\[\smallskipamount]
&&&&&\mbox{and for all } \mc{C}_{\mc{G}}(v) \in \mc{N}_{t+1}^{\mc{G}},\\[\smallskipamount] 
&&&&&t=1, \ldots, T-2, \\[\smallskipamount] 
&&S_{\mc{C}_{\mc{F}}(v)} \cdot H_v &= &S_{\mc{C}_{\mc{F}}(v)} \cdot H_{a_{\mc{G}}(v)} &\mbox{for all } v \in \mc{N}_{T-1}^{\mc{G}} \\[\smallskipamount] 
&&&&&\mbox{and for all } \mc{C}_{\mc{F}}(v) \in \mc{N}_{T-1}^{\mc{F}}
\end{array}
\end{equation}

\noindent where the minimization is done with respect to $\kappa \in \mb{R}$ and $H_v \in \mb{R}$ for $v \in \mc{N}_t^{\mc{G}}$, for $t = 0, 1, \ldots, T-1$. Moreover, $\mc{N}_t^{\mc{G}}$ denotes the set of time $t$ vertices in the scenario tree representing the filtration $\mc{G}$, and similarly for the filtration $\mc{F}$. $B_v$ denotes the value of the claim $B$ in the vertex $v \in \mc{N}_T^{\mc{G}}$ (note that each $v \in \mc{N}_T^{\mc{G}}$ corresponds to an $\omega \in \Omega$). Also, $a_{\mc{G}}(v)$ denotes the parent of vertex $v$ w.r.t. the filtration $\mc{G}$ (see Section \ref{sec: simplersituation}). Similarly, $\mc{C}_{\mc{G}}(v)$ and $\mc{C}_{\mc{F}}(v)$ denote the children vertices of vertex $v$ w.r.t. $\mc{G}$ and $\mc{F}$, respectively. 

Hence, the seller's problem is: Minimize the price $\kappa$ of the claim $B$ such that the seller is able to pay $B$ at time $T$ from investments in a self-financing, $\mc{G}$-adapted portfolio that costs less than or equal to $\kappa$ at time $0$. Note that the feasibility constraints in problem~(\ref{eq: prisproblemselger}) imply that the seller acts in a self-financing matter w.r.t. the \emph{actual} prices in the market. Let $\tilde{p}$ denote the seller's price of the claim $B$, so $\tilde{p}$ is the optimal value of problem~(\ref{eq: prisproblemselger}). Problem (\ref{eq: prisproblemselger}) is a linear programming problem. Hence, there are efficient algorithms, such as the simplex algorithm or interior point methods, for solving problem~(\ref{eq: prisproblemselger}), at least if the scenario tree is not too large.

We determine the dual problem of (\ref{eq: prisproblemselger}), using Lagrange duality techniques. In order to use the Lagrange duality method, rewrite the equality feasibility constraints as two inequality constraints. Let $y_0 \geq 0$, $z_v \geq 0$ for all $v \in \mc{N}_T^{\mc{G}}$, $y_v^1, y^2_v \geq 0$ for all $v \in \mc{N}^{\mc{G}}_t$, for $t = 2, 3 \ldots, T-1$ and $w^1_v, w^2_v \geq 0$ for all $v \in \mc{N}_{\mc{F}}^{T-1}$ be the Lagrange dual variables. Let $z$ denote the vector of all the $z_v$'s and similarly $y^i$ $w^i$ the vector of all the $y_v^i$'s and $w^i_v$'s for $i=1,2$. Then, the Lagrange dual problem is

\[
\begin{array}{lll}
\sup_{y_0, z, y^1, y^2,w^1, w^2 \geq 0} &\inf_{\kappa, H} \{\kappa + y_0(S_0 \cdot H_0 - \kappa) + \sum_{v \in \mc{N}_T^{\mc{G}}} z_v(B_v - S_v \cdot H_{a_{\mc{G}}(v)}) \\[\smallskipamount]
&+ \sum_{t = 1}^{T-2} \sum_{v \in \mc{N}_t^{\mc{G}}} \sum_{u \in \mc{C}_{\mc{G}}(v)} (y^1_u - y^2_u) S_u \Delta H_v \\[\smallskipamount]
&+ \sum_{v \in \mc{N}^{\mc{F}}_{T-2}} \sum_{u \in \mc{C}_{\mc{F}}(v)}(w^1_u - w^2_u) S_u \cdot \Delta H_v\} \\[\smallskipamount]
= \sup_{y_0, z \geq 0, y,w} &\{\inf_{\kappa}\{\kappa(1 - y_0)\} + \inf_{H_0}\{H_0 \cdot (y_0 S_0 -\sum_{u \in \mc{C}_{\mc{G}}(1)} y_u S_u)\} \\[\smallskipamount]
&+ \sum_{t=1}^{T-3} \sum_{v \in \mc{N}_t^{\mc{G}}} \inf_{H_v} \{H_v \cdot \sum_{u \in \mc{C}_{\mc{G}}(v)}(y_u S_u \\[\smallskipamount]
&-\sum_{\mu \in \mc{C}_{\mc{G}}(u)} y_{\mu} S_{\mu})\} + \sum_{v \in \mc{N}_{T-2}^{\mc{G}}} \inf_{H_v} \{H_v \cdot \sum_{u \in \mc{C}_{\mc{G}}(v)}(y_u S_u \\[\smallskipamount]
&- \sum_{\mu \in \mc{C}_{\mc{F}}(u)}w_{\mu} S_{\mu})\} + \sum_{v \in \mc{N}_{T-1}^{\mc{G}}} \inf_{H_v}\{H_v \cdot \\[\smallskipamount]
&(\sum_{u \in \mc{C}_{\mc{F}}(v)} w_u S_u - \sum_{u \in \mc{C}_{\mc{G}}(v)} z_u S_u)\} + \sum_{v \in \mc{N}_{T}^{\mc{G}}} z_v B_v\}
\end{array}
\]

\noindent where $y_v := y_v^1 - y_v^2$ and $w_v := w_v^1 - w_v^2$ are free variables, $\Delta H_v := H_v - H_{a_{\mc{G}}(v)}$ and we have exploited that the Lagrange function is separable. 

Consider each of the minimization problems separately. In order to have a feasible dual solution, all of these minimization problems must have optimal value greater than $-\infty$:

\begin{itemize}
\item{$\inf_{\kappa}\{\kappa(1 - y_0)\} > -\infty$ if and only if $y_0 = 1$. In this case, the infimum is $0$.}

\item{$\inf_{H_0}\{H_0 \cdot (y_0 S_0 - \sum_{u \in \mc{C}_{\mc{G}}(1)} y_u S_u)\} > -\infty$ if and only if $y_0 S_0 = \sum_{u \in \mc{C}_{\mc{G}}(1)} y_u S_u$. In this case, the infimum is $0$.}

\item{Note that
\[
\inf_{H_v}  \{H_v \cdot \sum_{u \in \mc{C}_{\mc{G}}(v)}(y_u S_u - \sum_{\mu \in \mc{C}_{\mc{G}}(u)} y_{\mu} S_{\mu})\}> -\infty 
\]
\noindent if and only if $\sum_{u \in \mc{C}_{\mc{G}}(v)}(y_u S_u - \sum_{\mu \in \mc{C}_{\mc{G}}(u)} y_{\mu} S_{\mu})=0$. Therefore, in order to get a dual solution, this must hold for all $v \in \mc{N}_t^{\mc{G}}$ for $t = 1, 2, \ldots, T-3$. In this case, the infima are $0$.}

\item{Furthermore, $\inf_{H_v} \{H_v \cdot \sum_{u \in \mc{C}_{\mc{G}}(v)}(y_u S_u - \sum_{\mu \in \mc{C}_{\mc{F}}(u)}w_{\mu} S_{\mu})\} > -\infty$ if and only if $\sum_{u \in \mc{C}_{\mc{G}}(v)}(y_u S_u - \sum_{\mu \in \mc{C}_{\mc{F}}(u)}w_{\mu} S_{\mu}) = 0$. Again, in this case, the infimum is $0$.}
\item{Finally, $\inf_{H_v}\{H_v \cdot (\sum_{u \in \mc{C}_{\mc{F}}(v)} w_u S_u - \sum_{u \in \mc{C}_{\mc{G}}(v)} z_u S_u)\} > -\infty$ if and only if $\sum_{u \in \mc{C}_{\mc{F}}(v)} w_u S_u = \sum_{u \in \mc{C}_{\mc{G}}(v)} z_u S_u$. Hence, this must hold for all $v \in \mc{N}_{T-1}^{\mc{G}}$. In this case the infimum is $0$.}
\end{itemize}

Hence, the dual problem is 

\begin{equation}
\label{eq: nr1}
\begin{array}{rrllll}
\multicolumn{2}{l}{\sup_{y_0, z \geq 0, y,w} \hspace{0.5cm} \sum_{v \in \mc{N}_T^{\mc{G}}} z_v B_v} \\[\smallskipamount]
\multicolumn{2}{l}{\mbox{subject to}}\\[\smallskipamount]
&y_0 &=& 1, \\[\smallskipamount]
&y_0 S_0 &=& \sum_{u \in \mc{C}_{\mc{G}}(1)} y_u S_u,  \\[\smallskipamount]
&\sum_{u \in \mc{C}_{\mc{G}}(v)}(y_u S_u - \sum_{\mu \in \mc{C}_{\mc{G}}(u)} y_{\mu} S_{\mu}) &=& 0 \hspace{0.2cm} \mbox{ for all } v \in \mc{N}_t^{\mc{G}}, t = 1, 2, \ldots, T-3, \\[\smallskipamount]
&\sum_{u \in \mc{C}_{\mc{G}}(v)}(y_u S_u  - \sum_{\mu \in \mc{C}_{\mc{F}}(u)} w_{\mu} S_{\mu})&=& 0 \hspace{0.2cm} \mbox{ for all } v \in \mc{N}_{T-2}^{\mc{G}},\\[\smallskipamount]
&\sum_{u \in \mc{C}_{\mc{F}}(v)} w_u S_u &=& \sum_{u \in \mc{C}_{\mc{G}}(v)} z_u S_u \hspace{0.2cm} \mbox{ for all } v \in \mc{N}_{T-1}^{\mc{G}}.
\end{array}
\end{equation}

\noindent Note that the dual feasibility conditions are vector equations. From the linear programming duality theorem, see Vanderbei~\cite{Vanderbei}, there is no duality gap. Hence, the optimal value of problem~(\ref{eq: prisproblemselger}) equals the optimal value of problem~(\ref{eq: nr1}).

By analyzing the dual feasibility conditions, we can remove the variable $w$ and rewrite problem~(\ref{eq: nr1}) so that it is expressed using the filtration $(\mc{F}_t)_t$:

\begin{equation}
\label{eq: nr4}
\begin{array}{rrlll}
\multicolumn{2}{l}{\sup_{y_0, z \geq 0, y} \hspace{0.5cm} \sum_{v \in \mc{N}_T^{\mc{G}}}  z_v B_v} \\[\smallskipamount]
\multicolumn{2}{l}{\mbox{subject to}} \\[\smallskipamount]
&y_0 &=& 1, \\[\smallskipamount]
&y_0 S_0 &=& \sum_{u \in \mc{C}_{\mc{F}}(0)} y_u S_u, \\[\smallskipamount]
&\sum_{u \in \mc{C}_{\mc{F}}(v)}(y_u S_u - \sum_{\mu \in \mc{C}_{\mc{F}(u)}} y_{\mu} S_{\mu}) &=& 0 \hspace{0.3cm} \mbox{ for all } v \in \mc{N}_t^{\mc{F}}, \\[\smallskipamount]
&&&\hspace{0.6cm} t = 0, 1, \ldots, T-4, \\[\smallskipamount]
&\sum_{u \in \mc{C}_{\mc{F}}(v)}(y_u S_u  - \sum_{\mu \in \mc{C}_{\mc{F}}(u)} \sum_{\gamma \in \mc{C}_{\mc{F}}(\mu)} z_{\gamma} S_{\gamma})&=& 0 \hspace{0.3cm} \mbox{ for all } v \in \mc{N}_{T-3}^{\mc{F}}.
\end{array}
\end{equation}

It is difficult to interpret problem~(\ref{eq: nr4}) in its present form. It turns out that we can rewrite this problem slightly so that it is easier to understand. Note that

\begin{equation}
\label{eq: star}
 \sum_{v \in \mc{N}_T^{\mc{F}}} z_v S_v = \sum_{u \in \mc{N}_1^{\mc{F}}} y_u S_u = y_0 S_0 
\end{equation}

\noindent where the first equality follows from using the dual feasibility conditions inductively, and summing over all vertices at each time. Equation~(\ref{eq: star}) is a vector equation. Since the market is normalized, the first component of the price process vector is $1$ at each time $t$. Hence, equation~(\ref{eq: star}) implies that $\sum_{v \in \mc{N}_t^{\mc{F}}} z_v = y_0 = 1$ where the final equality uses the first dual feasibility condition. Recall that $z$ is non-negative from problem~(\ref{eq: nr4}). Hence, $z$ can be identified with a probability measure on the terminal vertices of the scenario tree. Denote this probability measure by $Q$. Then, problem~(\ref{eq: nr4}) can be rewritten

\begin{equation}
\label{eq: nr6}
\begin{array}{lrlllll}
\multicolumn{2}{l}{\sup_{Q, y} \hspace{0.5cm} E_Q[B]} \\[\smallskipamount]
\multicolumn{2}{l}{\mbox{subject to}} \\[\smallskipamount]
(i) &S_0& = \sum_{u \in \mc{C}_{\mc{F}}(0)} y_u S_u, \\[\smallskipamount]
(ii) &\sum_{u \in \mc{C}_{\mc{F}}(v)}(y_u S_u - \sum_{\mu \in \mc{C}_{\mc{F}}(u)} y_{\mu} S_{\mu})& = 0 \hspace{0.1cm} \mbox{ for } v \in \mc{N}_t^{\mc{F}}, t = 0, \ldots, T-4, \\[\smallskipamount]
(iii) &\sum_{u \in \mc{C}_{\mc{F}}(v)}y_u S_u & = \sum_{u \in \mc{C}_{\mc{F}}(v)} \sum_{\mu \in \mc{C}_{\mc{F}}(u)} \sum_{\gamma \in \mc{C}_{\mc{F}}(\mu)} q_{\gamma} S_{\gamma} \\[\smallskipamount] 
&&\hspace{0.75cm} \mbox{for } v \in \mc{N}_{T-3}^{\mc{F}}
\end{array}
\end{equation}

\noindent where $Q$ is a probability measure and $q_{\gamma}$ denotes the $Q$-probability of ending up in vertex $\gamma$ at time $T$.

The dual problem is to maximize the expectation of the contingent claim $B$ over a set of probability measures, and some constraints regarding the price process and a free variable $y$. However, there is no martingale measure interpretation of the dual problem. Let $\tilde{d}$ denote the optimal value of the transformed dual problem~(\ref{eq: nr6}).

The previous derivation gives us the following theorem.

\begin{theorem}
 \label{thm: selgerspris}
Consider a seller of a contingent claim $B$ who has partial information in the sense that her price information is delayed by one time step. Let $(\mc{F}_t)_t$ denote the filtration generated by the price process $S$. Then, $\tilde{p} = \tilde{d}$, i.e. the seller's price of $B$ is equal to the optimal value of problem~(\ref{eq: nr6}).
\end{theorem}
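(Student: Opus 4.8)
The plan is essentially to collect and chain together the chain of equalities that the preceding derivation has already established, verifying that each transition is lossless. Concretely, I would argue as follows. First, problem~(\ref{eq: prisproblemselger}) is a linear program with finite optimal value $\tilde{p}$ (finiteness: feasibility is clear since one may buy enough bonds to super-replicate $B$, and boundedness below by $0$ follows because $B \geq 0$ forces $\kappa \geq 0$ on any feasible point). Hence by the linear programming duality theorem (Vanderbei~\cite{Vanderbei}) the Lagrangian dual has the same optimal value and is attained; the Lagrangian computation carried out above shows this dual is exactly problem~(\ref{eq: nr1}). So $\tilde{p}$ equals the optimal value of~(\ref{eq: nr1}).

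Next I would justify the passage from~(\ref{eq: nr1}) to~(\ref{eq: nr4}): this is a change of variables that eliminates $w$. The key observation is that the constraint $\sum_{u \in \mc{C}_{\mc{F}}(v)} w_u S_u = \sum_{u \in \mc{C}_{\mc{G}}(v)} z_u S_u$ for $v \in \mc{N}_{T-1}^{\mc{G}}$ determines the relevant aggregates of $w$ in terms of $z$, and substituting these into the constraint at level $T-2$ gives precisely the last constraint of~(\ref{eq: nr4}); since $w$ appears in~(\ref{eq: nr1}) only through these aggregated inner products and not in the objective, the feasible sets project onto each other and the suprema agree. Here one must use that $\mc{G}$ and $\mc{F}$ are offset by one time step, so $\mc{C}_{\mc{G}}(v)$ at the relevant levels coincides with $\mc{C}_{\mc{F}}$ one step later, which is what lets all constraints be re-expressed on the tree of~$\mc{F}$. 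Then~(\ref{eq: star}) follows by induction down the tree from the feasibility conditions of~(\ref{eq: nr4}), summing over all vertices at each time; reading off the first (bond) component and using $y_0 = 1$ gives $\sum_{v \in \mc{N}_T^{\mc{F}}} z_v = 1$, so with $z \geq 0$ the vector $z$ is a probability measure $Q$, $q_\gamma := z_\gamma$. Rewriting the objective $\sum_v z_v B_v$ as $E_Q[B]$ and the constraints in terms of $q$ yields problem~(\ref{eq: nr6}), with optimal value $\tilde{d}$; since this is just relabelling, the optimal value is unchanged. Chaining the equalities: $\tilde{p} = \mathrm{val}(\ref{eq: nr1}) = \mathrm{val}(\ref{eq: nr4}) = \tilde{d}$.

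The main obstacle I expect is not any single deep step but the bookkeeping in the $w$-elimination and in~(\ref{eq: star}): one has to be careful that the index sets $\mc{N}_t^{\mc{G}}$, $\mc{N}_t^{\mc{F}}$, $\mc{C}_{\mc{G}}$, $\mc{C}_{\mc{F}}$ line up correctly under the one-step delay (this is exactly why the hypothesis $T \geq 4$ is imposed — the constraint blocks at $t = 1, \ldots, T-3$, $t = T-2$, $t = T-1$ must all be genuinely present and distinct), and that substituting the $w$-constraints really does reproduce~(\ref{eq: nr4}) without losing or adding feasible points. A secondary point worth spelling out is that passing from~(\ref{eq: nr1}) to~(\ref{eq: nr4}) preserves the supremum in both directions: every feasible $(y_0,z,y)$ for~(\ref{eq: nr4}) lifts to a feasible $(y_0,z,y,w)$ for~(\ref{eq: nr1}) with the appropriate $w$, and conversely every feasible point of~(\ref{eq: nr1}) restricts to one of~(\ref{eq: nr4}). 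Once that is checked, the theorem is immediate, since "no duality gap" for the original LP is the only analytic input and everything else is algebraic reformulation.
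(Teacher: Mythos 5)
Your proposal is correct and follows essentially the same route as the paper: the paper's proof \emph{is} the preceding derivation (Lagrange dual of problem~(\ref{eq: prisproblemselger}), no duality gap by the LP duality theorem, elimination of $w$ to pass from~(\ref{eq: nr1}) to~(\ref{eq: nr4}), and the identification of $z$ with a probability measure via~(\ref{eq: star}) to obtain~(\ref{eq: nr6})), and you simply make explicit that each reformulation preserves the optimal value. One small caveat: your parenthetical claim that $B \geq 0$ forces $\kappa \geq 0$ at every feasible point is not justified without a no-arbitrage assumption (a $\mc{G}$-adapted self-financing super-replicating strategy could a priori have negative initial cost), but this aside is inessential, since the LP duality theorem equates the primal and dual values also in the unbounded/infeasible cases.
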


Note that for a specific problem, one can solve problem~(\ref{eq: nr6}) using the simplex algorithm or interior point methods (for a reasonably sized scenario tree). Also, the same kind of argument as above can be done from the buyer's point of view, yielding dual problem similar to problem~(\ref{eq: nr6}), but with infimum instead of the supremum.

\section{Some comments on the dual problem}
\label{sec: compare}

\subsection{Connection to full information}
From Delbaen and Schachermayer~\cite{DelbaenSchachPrice} (or a derivation similar to that of Section~\ref{sec: partialinfo}), we know that the seller's price of $B$ with full information is 
\begin{equation}
\label{eq: fullinfo}
 \alpha := \sup_{Q \in \mc{M}(S, \mc{F})} E_Q[B]
\end{equation}
\noindent where $\mc{M}(S, \mc{F})$ denotes the set of equivalent martingale measures w.r.t. the filtration $(\mc{F}_t)_t$. In the following, assume there exists a $Q \in \mc{M}(S, \mc{F})$. From \cite{DelbaenSchachPrice}, this means that there is no arbitrage in the market.

\begin{theorem}
 \label{thm: compareprice}
The difference between the price of $B$ offered by a seller with delayed information and a seller with full information is
\begin{equation}
\label{eq: compareprice}
\tilde{d} - \alpha \geq 0.
\end{equation}
\end{theorem}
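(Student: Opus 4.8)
The plan is to show that the feasible region of the delayed-information dual problem~(\ref{eq: nr6}) contains (a copy of) the set of equivalent martingale measures $\mc{M}(S,\mc{F})$, so that the supremum defining $\tilde d$ is taken over a larger set than the supremum defining $\alpha$, forcing $\tilde d \geq \alpha$. Concretely, given $Q \in \mc{M}(S,\mc{F})$, I would let $q_\gamma$ be the $Q$-probability of the terminal vertex $\gamma$, and then \emph{define} the auxiliary variable $y$ by pushing $Q$ back one level: for $u \in \mc{N}_{T-1}^{\mc{F}}$ set $y_u S_u := \sum_{\mu \in \mc{C}_{\mc{F}}(u)} q_\mu S_\mu$ (componentwise this determines $y_u$ since the bond component of $S$ is $1$, giving $y_u = \sum_{\mu \in \mc{C}_{\mc{F}}(u)} q_\mu$, the $Q$-probability of reaching $u$), and for earlier vertices $u \in \mc{N}_t^{\mc{F}}$, $t \leq T-2$, set $y_u := Q(\{u\})$ analogously. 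The claim is then that this $(Q,y)$ satisfies constraints (i), (ii), (iii) of problem~(\ref{eq: nr6}).

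The verification is where the martingale property is used. For constraint (ii), the martingale condition $E_Q[S_{t+1}\mid \mc{F}_t] = S_t$ at a node $w$ reads $\sum_{\mu \in \mc{C}_{\mc{F}}(w)} q_\mu S_\mu = q_w S_w$ (in the backward-probability normalization), i.e. $y_\mu$ summed against prices one step forward reproduces $y_w S_w$; summing the one-step martingale identity over the children $u \in \mc{C}_{\mc{F}}(v)$ of a node $v$ and telescoping gives exactly $\sum_{u \in \mc{C}_{\mc{F}}(v)} y_u S_u = \sum_{u \in \mc{C}_{\mc{F}}(v)} \sum_{\mu \in \mc{C}_{\mc{F}}(u)} y_\mu S_\mu$, which is (ii). Constraint (iii) is the same telescoping identity carried one extra step down to the terminal level, replacing the innermost $y_\mu$ by the terminal probabilities $q_\gamma$ via $y_\mu S_\mu = \sum_{\gamma \in \mc{C}_{\mc{F}}(\mu)} q_\gamma S_\gamma$. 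Constraint (i) is just the martingale property at the root, $S_0 = E_Q[S_1] = \sum_{u \in \mc{C}_{\mc{F}}(0)} q_u S_u = \sum_{u \in \mc{C}_{\mc{F}}(0)} y_u S_u$. Since $Q$ is a probability measure, all $q_\gamma \geq 0$ and they sum to $1$, so $(Q,y)$ is feasible for~(\ref{eq: nr6}), and its objective value is $E_Q[B]$, the same as in~(\ref{eq: fullinfo}).

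Having established $\{(Q,y) : Q \in \mc{M}(S,\mc{F})\} \subseteq \{\text{feasible set of }(\ref{eq: nr6})\}$ with matching objectives, I conclude
\[
\alpha = \sup_{Q \in \mc{M}(S,\mc{F})} E_Q[B] \leq \sup_{(Q,y) \text{ feasible for }(\ref{eq: nr6})} E_Q[B] = \tilde d,
\]
which is~(\ref{eq: compareprice}). By Theorem~\ref{thm: selgerspris}, $\tilde d = \tilde p$, so this also says $\tilde p \geq \alpha$: the delayed seller charges at least as much, as claimed in the introduction.

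I expect the main obstacle to be purely bookkeeping: matching the three different "levels" at which the constraints in~(\ref{eq: nr6}) are stated (generic intermediate nodes in (ii) versus the special level $T-3$ in (iii) versus the root in (i)) against the single uniform martingale identity, and being careful that the normalization of the $y$'s and $q$'s as backward probabilities is consistent with how $S$ is normalized (bond $\equiv 1$). There is a mild subtlety in that problem~(\ref{eq: nr6}) only \emph{assumes} a feasible $(Q,y)$ exists once we exhibit one, which the standing hypothesis $\mc{M}(S,\mc{F}) \neq \emptyset$ guarantees; no genuine analytic difficulty arises because everything is finite-dimensional and the argument is a direct containment of feasible sets.
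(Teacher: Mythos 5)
Your proposal is correct and is essentially the paper's own argument: given $Q \in \mc{M}(S,\mc{F})$, define $y$ at each intermediate vertex as the $Q$-probability of reaching that vertex (built backwards from the terminal probabilities $q_\gamma$), verify constraints $(i)$--$(iii)$ of problem~(\ref{eq: nr6}) from the one- and two-step martingale identities, and conclude $\alpha \leq \tilde d$ by inclusion of feasible sets with matching objective $E_Q[B]$. No substantive difference from the paper's proof.
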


\begin{proof}
 From the definition of $\tilde{d}$ and $\alpha$, it suffices to prove that each $Q \in \mc{M}(S, \mc{F})$ corresponds to a solution $y, \tilde{Q}$ of problem~(\ref{eq: nr6}). Hence, let $Q \in \mc{M}(S, \mc{F})$. Define $\tilde{Q} := Q$, and for each $v \in \mc{N}_{T-1}^{\mc{F}}$, define $y_v := \sum_{u \in {C}_{\mc{F}}(v)} \tilde{q}_u$. Similarly, for each $v \in \mc{N}^{\mc{F}}_{T-2}$, define $y_v := \sum_{u \in {C}_{\mc{F}}(v)} y_u$. Iteratively, we define $y_v := \sum_{u \in {C}_{\mc{F}}(v)} y_u$ for each $v \in \mc{N}_{t}^{\mc{F}}$, $t=0, \ldots, T-3$. We would like to show that $\tilde{Q}, y$ are feasible for problem~(\ref{eq: nr6}). 
\begin{itemize}
 \item[$(i)$]: Since $Q \in \mc{M}(S, \mc{F})$, $S_0 = E_Q[S_1 | \mc{F}_0]$, which from the definition of conditional expectation implies $(i)$.
\item[$(ii)$]: $Q \in \mc{M}(S, \mc{F})$ implies that $E_Q[S_{t+1} | \mc{F}_t] = S_t$. Hence, from the definition of conditional expectation, $y_u S_u = \sum_{\mu \in \mc{C}_{\mc{F}}(u)} y_{\mu} S_{\mu}$ for all $u \in \mc{N}_t^{\mc{F}}$, so $(ii)$ holds.
\item[$(iii)$]: Again, since $Q \in \mc{M}(S, \mc{F})$, $E_Q[S_T | \mc{F}_{T-2}] = S_{T-2}$. Hence, \\
$y_u S_u = \sum_{\mu \in \mc{C}_{\mc{F}}(u)} \sum_{\gamma \in \mc{C}_{\mc{F}}(\mu)} q_{\gamma} S_{\gamma}$, so $(iii)$ holds.
\end{itemize}
Hence, the theorem follows.
\end{proof}
\noindent The difference  in Theorem~\ref{thm: compareprice} can be computed for specific examples.

Theorem~\ref{thm: compareprice} implies that, as one would expect, the seller with only partial information will demand a higher price for $B$ than a fully informed seller. As in Section~\ref{sec: partialinfo}, the same kind of argument goes through for a buyer of the claim. Hence, the probability of a seller and buyer agreeing on a price of the claim is smaller in a market with delayed information, than in the fully informed case.

\subsection{A closer bound}
\label{sec: skranke}
We can find an interpretable problem which has optimal value closer to that of problem~(\ref{eq: nr6}) than the full information problem~(\ref{eq: fullinfo}). Consider the following optimization problem

\begin{equation}
 \label{eq: squeeze}
\begin{array}{lrlll}
\sup_{Q} &E_Q[B] \\[\smallskipamount]
\mbox{subject to} \\[\smallskipamount]
&S_0 &=& E_Q[S_1], \\[\smallskipamount]
&E_Q[S_{t+1} | \mc{F}_t] &=& E_Q[S_{t+2} | \mc{F}_t] &\mbox{for } t=0,1, \ldots, T-4,\\[\smallskipamount]
&E_Q[S_{T-2} | \mc{F}_{T-3}] &=& E_Q[S_T | \mc{F}_{T-3}].
\end{array}
\end{equation}
\noindent Let $\beta$ denote the the optimal value of problem~(\ref{eq: squeeze}).

\begin{theorem}
 \label{prop: squeeze}
The optimal value of problem~(\ref{eq: squeeze}) lies between the price of $B$ offered by the seller with full information and the price offered by the seller with delayed information, i.e.,
\[
\alpha \leq \beta \leq \tilde{d}
\]
\end{theorem}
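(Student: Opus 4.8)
The plan is to prove the two inequalities $\alpha \le \beta$ and $\beta \le \tilde d$ separately, each by a feasibility/relaxation argument of exactly the same flavor as the proof of Theorem~\ref{thm: compareprice}. Since all three quantities are suprema of $E_Q[B]$ over nested feasible sets (after reinterpreting the variables appropriately), it suffices in each case to show that every feasible point of the smaller problem gives rise to a feasible point of the larger one with the same objective value.

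For $\alpha \le \beta$: let $Q \in \mc{M}(S,\mc{F})$ be an equivalent martingale measure. I claim $Q$ is feasible for problem~(\ref{eq: squeeze}). Indeed, $S_0 = E_Q[S_1|\mc{F}_0] = E_Q[S_1]$ gives the first constraint; the martingale property $E_Q[S_{t+1}|\mc{F}_t] = S_t = E_Q[S_{t+2}|\mc{F}_t]$ (using the tower property for the second equality) gives the second family of constraints for $t = 0,\ldots,T-4$; and $E_Q[S_{T-2}|\mc{F}_{T-3}] = S_{T-3} = E_Q[S_T|\mc{F}_{T-3}]$ gives the last one. Since the objective $E_Q[B]$ is unchanged, taking the supremum over $\mc{M}(S,\mc{F})$ yields $\alpha \le \beta$.

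For $\beta \le \tilde d$: the idea is that problem~(\ref{eq: nr6}) is a \emph{relaxation} of problem~(\ref{eq: squeeze}) once we translate~(\ref{eq: squeeze}) into the vertex notation of problem~(\ref{eq: nr6}). Given a feasible $Q$ for~(\ref{eq: squeeze}), define the $y$-variables as in the proof of Theorem~\ref{thm: compareprice} but only from time $T-3$ downward in a way consistent with~(\ref{eq: squeeze}): for $v \in \mc{N}_{T-2}^{\mc{F}}$ set $y_v$ so that $y_v S_v$ records $E_Q[S_{T}\,|\,v] = E_Q[S_{T-2}\,|\,v]$ appropriately, and propagate $y_v := \sum_{u \in \mc{C}_{\mc{F}}(v)} y_u$ down to time $0$ using constraint $(ii)$ of~(\ref{eq: nr6}), which is exactly the second constraint family of~(\ref{eq: squeeze}) written out node-by-node. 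Constraint $(i)$ of~(\ref{eq: nr6}) is $S_0 = E_Q[S_1]$, the first constraint of~(\ref{eq: squeeze}). Constraint $(iii)$ of~(\ref{eq: nr6}) at a node $v \in \mc{N}_{T-3}^{\mc{F}}$ reads $\sum_{u\in\mc{C}_{\mc{F}}(v)} y_u S_u = \sum_{u}\sum_{\mu}\sum_{\gamma} q_\gamma S_\gamma$, i.e. $E_Q[S_{T-2}\,|\,v] = E_Q[S_T\,|\,v]$ after using the definition of the $y$'s one level down — this is the last constraint of~(\ref{eq: squeeze}). So $(Q,y)$ is feasible for~(\ref{eq: nr6}) with the same objective, and taking suprema gives $\beta \le \tilde d$.

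The main obstacle I expect is the bookkeeping in the second inequality: matching the multi-level sums $\sum_{u \in \mc{C}_{\mc{F}}(v)}\sum_{\mu \in \mc{C}_{\mc{F}}(u)}\sum_{\gamma \in \mc{C}_{\mc{F}}(\mu)} q_\gamma S_\gamma$ in constraint $(iii)$ against the conditional expectation $E_Q[S_T \,|\,\mc{F}_{T-3}]$, and checking that the recursive definition $y_v := \sum_{u \in \mc{C}_{\mc{F}}(v)} y_u$ is genuinely \emph{forced} to be consistent with constraint $(ii)$ rather than merely compatible with it — in other words, verifying that the $y$-variables produced from $Q$ actually satisfy every node constraint of~(\ref{eq: nr6}) and not just the "aggregated" versions. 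A clean way to handle this is to note (as in equation~(\ref{eq: star}) and the proof of Theorem~\ref{thm: compareprice}) that for any $\mc{F}_t$-node $v$, the quantity $y_v S_v$ built this way equals $E_Q[\,\cdot\,|\,v]$ of whichever future price vector the corresponding constraint refers to, and then each constraint of~(\ref{eq: nr6}) becomes a literal restatement of the corresponding constraint of~(\ref{eq: squeeze}) via the definition of conditional expectation on a finite probability space. Once this dictionary is set up, both inclusions are immediate and the chain $\alpha \le \beta \le \tilde d$ follows.
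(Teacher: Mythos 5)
Your proof is correct and follows essentially the same route as the paper's: every $Q \in \mc{M}(S,\mc{F})$ is feasible for problem~(\ref{eq: squeeze}) (giving $\alpha \leq \beta$), and any $Q$ feasible for~(\ref{eq: squeeze}) lifts to a feasible pair $(\tilde{Q}, y)$ for problem~(\ref{eq: nr6}) with the same objective value, using the same $y$-construction as in the proof of Theorem~\ref{thm: compareprice} and the definition of conditional expectation (giving $\beta \leq \tilde{d}$). One cosmetic remark: there is no need to choose $y_v$ at time $T-2$ so that $y_v S_v$ ``records'' a conditional expectation (a scalar multiple of the vector $S_v$ cannot in general equal such a vector, nor do the constraints ask for this); simply taking $y_v = Q(v)$ at every node, exactly as in Theorem~\ref{thm: compareprice}, already turns each constraint of~(\ref{eq: nr6}) into the node-wise restatement of the corresponding constraint of~(\ref{eq: squeeze}), because $\sum_{u \in \mc{C}_{\mc{F}}(v)} y_u S_u = Q(v)\, E_Q[S_{t+1} \mid v]$.
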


\begin{proof}
Clearly, $\alpha \leq \beta$, from the definition of $\mc{M}(S, \mc{F})$.

It remains to prove that $\beta \leq \tilde{d}$. Hence, consider $Q$ feasible in problem~(\ref{eq: squeeze}). It suffices to prove that $Q$ corresponds to a feasible solution $\tilde{Q}, y$ for problem~(\ref{eq: nr6}). Define $\tilde{Q}$ and $y$ as in the proof of  Proposition~\ref{thm: compareprice}. We check the feasibility constraints of problem~(\ref{eq: nr6}).
\begin{itemize}
 \item[$(i)$]: Since $Q$ is feasible in (\ref{eq: squeeze}), $S_0 = E_Q[S_1]$. Hence, from the definition of conditional expectation, $S_0 = \sum_{u \in \mc{C}_{\mc{F}}(0)} y_u S_u$.
\item[$(ii)$]: Again, since $Q$ is feasible in (\ref{eq: squeeze}), $E_Q[S_{t+1} | \mc{F}_t] = E_Q[S_{t+2} | \mc{F}_t]$ for $t=0,1, \ldots, T-4$. From the definition of conditional expectation, this implies that $\sum_{u \in \mc{C}_{\mc{F}}(v)}(y_u S_u - \sum_{\mu \in \mc{C}_{\mc{F}}(u)} y_{\mu} S_{\mu}) = 0$ for all $v \in \mc{N}_t^{\mc{F}}$, $t = 0, \ldots, T-4$. Hence, $(ii)$ holds.
\item[$(iii)$]: $(iii)$ follows similarly from the feasibility of $Q$ in (\ref{eq: squeeze}) and the definition of conditional expectation.
\end{itemize}
Hence, $\beta \leq \tilde{d}$.
\end{proof}

\section{Final remarks}
\label{sec: finalremarks}

 A main idea of this paper has been to illustrate the close connection between pricing problems in mathematical finance and duality methods in optimization.

The results of this paper can actually be generalized to a model with arbitrary scenario space $\Omega$ by using the conjugate duality theory of Rockafellar~\cite{Rockafellar}. This is a work in progress.

Some questions open for further research are: 
\begin{itemize}
 \item{Can these results be generalized to a model with continuous time, possibly using a discrete time approximation?}
 \item{Is it possible to characterize the partially informed seller's dual problem more explicitly?}
\end{itemize}

\smallskip

\textbf{Acknowledgements:} We would like to thank an anonymous referee for careful reading and many helpful suggestions. We are also very grateful to Professor Bernt {\O}ksendal (University of Oslo) for several useful comments.

\begin{appendix}
 \section{Lagrange duality}
\label{sec: Lagrange}

This section reviews some basic ideas and results concerning Lagrange duality which will be useful in the following. For more on Lagrange duality and optimization theory, see Bertsekas et al.~\cite{Bertsekas}.

Let $X$ be a general inner product space with inner product $\langle \cdot,\cdot \rangle$. Consider a function $f: X \rightarrow \mb{R}$ and the very general optimization problem

\begin{equation}
\label{eq: opprinneligligning}
\mbox{minimize } f(x) \mbox{ subject to  } g(x) \leq 0,\mbox{ } x \in S
\end{equation}

\noindent where $g$ is a function such that $g: X \rightarrow \mb{R}^n$ and $S$ is a non-empty subset of $X$. Here, $g(x) \leq 0$ means component-wise inequality. This will be called the primal problem.

Define the Lagrange function, $L(x,\lambda)$, for $\lambda \in \mb{R}^n$, $\lambda \geq 0$ (component-wise), to be
 \begin{eqnarray}
 L(x, \lambda) &=& f(x) + \lambda \cdot g(x). \nonumber
  \end{eqnarray}
\noindent where $(\cdot)$ denotes Euclidean inner product.

Then, for all $x \in X$ such that $g(x) \leq 0$ (component-wise) and all $\lambda \in \mb{R}^n$, $\lambda \geq 0$, we have $L(x, \lambda) \leq f(x)$. This motivates the definition $L(\lambda) := \inf_{x \in S} L(x, \lambda)$ for all $\lambda \geq 0$ (note that $L(\lambda) = -\infty$ is possible), and the Lagrange dual problem
\[
\sup_{\lambda \geq 0} L(\lambda).
\]
This gives the following result called weak Lagrange duality.
\begin{proposition}
\label{thm: WeakLagrangeDuality}
\[
\begin{array}{lll}
\sup \{L(\lambda) : \lambda \geq 0\} &\leq& \inf\{f(x) : g(x) \leq 0, \mbox{ } x \in S\}.
\end{array}
\]
\end{proposition}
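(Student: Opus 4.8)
The plan is to prove the inequality by the standard two-sided estimate: show that for every fixed feasible $x$ and every $\lambda \geq 0$, the quantity $L(\lambda)$ is a lower bound for $f(x)$, and then pass to the supremum on the left and the infimum on the right. The key observation, already hinted at in the text, is the chain $L(\lambda) \leq L(x,\lambda) \leq f(x)$, valid under the appropriate hypotheses on $x$ and $\lambda$.

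First I would fix an arbitrary $x \in S$ with $g(x) \leq 0$ (component-wise) and an arbitrary $\lambda \in \mb{R}^n$ with $\lambda \geq 0$. Since $\lambda \geq 0$ and $g(x) \leq 0$ component-wise, the Euclidean inner product $\lambda \cdot g(x) = \sum_{i=1}^n \lambda_i g_i(x)$ is a sum of non-positive terms, hence $\lambda \cdot g(x) \leq 0$. Therefore $L(x,\lambda) = f(x) + \lambda \cdot g(x) \leq f(x)$. Next, since $x \in S$, by definition of the infimum $L(\lambda) = \inf_{x' \in S} L(x',\lambda) \leq L(x,\lambda)$. Combining the two inequalities gives $L(\lambda) \leq f(x)$ for this particular $x$ and $\lambda$ (and this remains correct even in the degenerate case $L(\lambda) = -\infty$).

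Now I would take suprema and infima in the right order. Fixing $x$ feasible and letting $\lambda \geq 0$ vary, the bound $L(\lambda) \leq f(x)$ holds for all such $\lambda$, so $f(x)$ is an upper bound for $\{L(\lambda) : \lambda \geq 0\}$ and hence $\sup\{L(\lambda) : \lambda \geq 0\} \leq f(x)$. This now holds for every feasible $x$, i.e. the left-hand side is a lower bound for $\{f(x) : g(x) \leq 0,\ x \in S\}$, so $\sup\{L(\lambda) : \lambda \geq 0\} \leq \inf\{f(x) : g(x) \leq 0,\ x \in S\}$, which is the claim.

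There is no real obstacle here; the only points requiring a little care are the sign argument for $\lambda \cdot g(x) \leq 0$ (which uses both $\lambda \geq 0$ and $g(x) \leq 0$ component-wise) and the bookkeeping with extended-real-valued infima — one should note that the argument is unaffected if $L(\lambda) = -\infty$ for some or all $\lambda$, and that if the primal feasible set is empty the right-hand side is $+\infty$ and the inequality is trivial.
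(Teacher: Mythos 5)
Your proof is correct and follows exactly the standard argument the paper itself sketches around the proposition: $\lambda \cdot g(x) \leq 0$ for feasible $x$ and $\lambda \geq 0$ gives $L(\lambda) \leq L(x,\lambda) \leq f(x)$, and taking the supremum over $\lambda$ and the infimum over feasible $x$ yields the claim. No gaps; your added remarks about the cases $L(\lambda) = -\infty$ and an empty feasible set are fine and only make the argument more careful.
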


Hence, the Lagrange dual problem gives the greatest lower bound on the optimal value of problem (\ref{eq: opprinneligligning}), based on $L$. Often the Lagrange dual problem is separable, and therefore fairly easy to solve. For some problems, one can proceed to show duality theorems, proving that $\sup_{\lambda \geq 0} L(\lambda) = \inf\{f(x) : g(x) \leq 0, \mbox{ } x \in S\}$. In this case, one says that there is no duality gap. This typically occurs in convex optimization problems under certain assumptions. For instance, the linear programming duality theorem (see Vanderbei~\cite{Vanderbei}) may be derived using Lagrange duality.

\end{appendix}

\end{document}